\documentclass{article}
%%%%%%%%%%%%%%%%%%%%%%%%%%%%%%%%%%%%%%%%%%%%%%%%%%%%%%%%%%%%%%%%%%%%%%%%%%%%%%%%%%%%%%%%%%%%%%%%%%%%%%%%%%%%%%%%%%%%%%%%%%%%%%%%%%%%%%%%%%%%%%%%%%%%%%%%%%%%%%%%%%%%%%%%%%%%%%%%%%%%%%%%%%%%%%%%%%%%%%%%%%%%%%%%%%%%%%%%%%%%%%%%%%%%%%%%%%%%%%%%%%%%%%%%%%%%
\usepackage{amssymb}
\usepackage{amsmath}
\usepackage{harvard}
\usepackage{appendix}
\usepackage{babel}
\usepackage{amsfonts}
\usepackage{cite}
\usepackage{graphicx}
\usepackage{subcaption}

\setcounter{MaxMatrixCols}{10}
%TCIDATA{OutputFilter=LATEX.DLL}
%TCIDATA{Version=5.50.0.2960}
%TCIDATA{<META NAME="SaveForMode" CONTENT="1">}
%TCIDATA{BibliographyScheme=BibTeX}
%TCIDATA{Created=Wednesday, January 25, 2023 16:59:53}
%TCIDATA{LastRevised=Tuesday, March 28, 2023 13:07:08}
%TCIDATA{<META NAME="GraphicsSave" CONTENT="32">}
%TCIDATA{<META NAME="DocumentShell" CONTENT="Articles\SW\Standard LaTeX Article (Harvard)">}
%TCIDATA{Language=American English}
%TCIDATA{CSTFile=LaTeX article (bright).cst}

\newtheorem{theorem}{Theorem}

\newtheorem{example}[theorem]{Example}

\newtheorem{notation}[theorem]{Notation}

\newtheorem{proposition}[theorem]{Proposition}

\newenvironment{proof}[1][Proof]{\noindent\textbf{#1.} }{\ \rule{0.5em}{0.5em}}

\begin{document}

\title{Dynamical properties of a cellular automaton model of language shift
in Algeria }
\author{{\small \ AIT SADI Nassima}$^{1}$ \ \ \ \ \ \ \ \ {\small CHEMLAL
Rezki}$^{2}$ \\
%EndAName
$^{1,2}$Laboratory of applied mathematics, Bejaia University, Algeria}
\maketitle

\begin{abstract}
In this paper we are interested in the dynamical properties of a two
dimensional cellular automaton that model the language shift in Algeria.
We will study among other properties the surjectivity and the existence of equicontinuity points for the cellular automaton. 
\newline
Using computer simulation we obtained some properties of the bassin of attraction of some fixed points. This experimental results are provided along with analytical proofs.

\begin{description}
\item[Keywords:] Cellular automata, topological dynamics.
\end{description}
\end{abstract}

\section{Introduction}

Cellular automata are employed in variety of modeling contexts. They dates
back to Von Neumann in the late $1940$s and the first model were
biologically motivated. During the last years they have been used to model
complex systems in many different domains, for example in physics, biology
and social choices \cite{physics,social}.

A Cellular automaton is a discrete model of computation that evolve in space and time. It is consist of a regular grid of cells that can adopt only
one given state at each time unit. The state of each cell is determined by applying the associated local rule of the cellular automaton to the current state of the cell and the states of the cells of its neighborhood. A
configuration is a snapshot of the state of all automata in the lattice. The
lattice is usually $%
%TCIMACRO{\U{2124} }%
%BeginExpansion
\mathbb{Z}
%EndExpansion
^{n}$ where $n$ represent the dimension of the cellular automaton.

One way to study the complexity of a cellular automaton is to endow the
space of configurations with the product topology and consider the cellular automaton
as a discrete dynamical system.

In this paper we focus on studying the dynamical properties of a $2-$dimensional cellular automaton that model the language shift in Algeria \cite%
{Chemlal}.
\newline
In first part we study the existence of periodic points, the existence of blocking
words and the surjectivity of the cellular automaton.
\newline
In the second part we
rely on a computer simulation to deduce and show some properties of the
bassin of attraction of the fixed point $3^{\infty \times \infty }$ for different values of the parameters $P_z$ and $P_e$.
\section{Basic definitions}

Let $A$ be an alphabet. A pattern $w$ is a set of values from $A$ on finite
connected subset of coordinates $E\subset 
%TCIMACRO{\U{2124} }%
%BeginExpansion
\mathbb{Z}
%EndExpansion
^{2}.$ The length of the pattern $w\in A^{n\times m}$ is \newline
$|w|=(|w|_{1},|w|_{2})=(n,m).$ We denote by $A^{%
%TCIMACRO{\U{2124} }%
%BeginExpansion
\mathbb{Z}
%EndExpansion
^{2}}$ the set of all configurations in $%
%TCIMACRO{\U{2124} }%
%BeginExpansion
\mathbb{Z}
%EndExpansion
^{2}$ constructed over the alphabet $A$.%
\begin{equation*}
\begin{tabular}{ccccc}
&  & $\vdots $ &  &  \\ 
& $x_{(-1,1)}$ & $x_{(0,1)}$ & $x_{(2,0)}$ &  \\ 
$\cdots $ & $x_{(-1,1)}$ & $x_{(0,0)}$ & $x_{(1,0)}$ & $\cdots $ \\ 
& $x_{(-1,1)}$ & $x_{(0,-1)}$ & $x_{(1,-1)}$ &  \\ 
&  & $\vdots $ &  & 
\end{tabular}%
\end{equation*}%
For any pattern $w$ we define the cylinder $[w]$ at position $(i,j)$ by:%
\begin{equation*}
\lbrack w]_{(i,j)}=\{x\in A^{%
%TCIMACRO{\U{2124} }%
%BeginExpansion
\mathbb{Z}
%EndExpansion
^{2}}:x_{[i,i+|w|_{1}]\times \lbrack j,j+|w|_{2}]}=w\}
\end{equation*}%
Endowed with the following distance%
\begin{equation*}
\forall x,y\in A^{%
%TCIMACRO{\U{2124} }%
%BeginExpansion
\mathbb{Z}
%EndExpansion
^{2}},d(x,y)=2^{-\min \{||\overrightarrow{i}||_{\infty }:x_{i}\neq
y_{i},i=(i_{1},i_{2})\in 
%TCIMACRO{\U{2124} }%
%BeginExpansion
\mathbb{Z}
%EndExpansion
^{2}\}}\text{where }||\overrightarrow{i}||_{\infty }=\underset{j=\overline{%
1,2}}{\max }\{i_{j}\}
\end{equation*}%
the space $A^{%
%TCIMACRO{\U{2124} }%
%BeginExpansion
\mathbb{Z}
%EndExpansion
^{2}}$ is compact, perfect, and totally disconnected in the product
topology. Any vector $\overrightarrow{i}\in 
%TCIMACRO{\U{2124} }%
%BeginExpansion
\mathbb{Z}
%EndExpansion
^{2}$ determines a continuous shift map\newline
$\sigma _{\overrightarrow{i}}:A^{%
%TCIMACRO{\U{2124} }%
%BeginExpansion
\mathbb{Z}
%EndExpansion
^{2}}\rightarrow A^{%
%TCIMACRO{\U{2124} }%
%BeginExpansion
\mathbb{Z}
%EndExpansion
^{2}}$ defined by $\sigma _{\overrightarrow{i}}(x)_{\overrightarrow{j}}=x_{%
\overrightarrow{j}+\overrightarrow{i}},\forall \overrightarrow{j}\in 
%TCIMACRO{\U{2124} }%
%BeginExpansion
\mathbb{Z}
%EndExpansion
^{2}.$ The set $\{\sigma _{\overrightarrow{i}}\}_{\overrightarrow{i}\in 
%TCIMACRO{\U{2124} }%
%BeginExpansion
\mathbb{Z}
%EndExpansion
^{2}}$ is denoted simply by $\sigma .$

\subsection{Cellular automata as dynamical systems:}

Let $E$ be a subset in $%
%TCIMACRO{\U{2124} }%
%BeginExpansion
\mathbb{Z}
%EndExpansion
^{2}$ and $f:A^{\Bbbk }\rightarrow A$ be a function called \emph{the local
rule}. \newline
A cellular automaton determined by $f$ is the function $F:A^{%
%TCIMACRO{\U{2124} }%
%BeginExpansion
\mathbb{Z}
%EndExpansion
^{2}}\rightarrow A^{%
%TCIMACRO{\U{2124} }%
%BeginExpansion
\mathbb{Z}
%EndExpansion
^{2}}$ defined by $F(x)_{m}=f(x_{m+\Bbbk })$ for all $x\in A^{%
%TCIMACRO{\U{2124} }%
%BeginExpansion
\mathbb{Z}
%EndExpansion
^{2}}$ and all $m\in 
%TCIMACRO{\U{2124} }%
%BeginExpansion
\mathbb{Z}
%EndExpansion
^{2}.$ Curtis-Hedlund and Lyndon \cite{Hedlund} showed that cellular
automata are exactly the continuous transformations of $A^{%
%TCIMACRO{\U{2124} }%
%BeginExpansion
\mathbb{Z}
%EndExpansion
^{2}}$ that commute with all shifts. We refer to $\Bbbk $ as a neighborhood
of $F.$ Generally, we use the Von Neumann neighborhood or the Moore
neighborhood in dimension $2.$%

\begin{figure}[h]
    \centering
    \begin{minipage}{0.49\textwidth}
    \centering
    \includegraphics{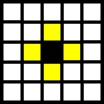}
     \caption{Von Neumann}
     \label{fig:Neumann neighboor}  
     \end{minipage}
     \hfill
    \begin{minipage}{0.49\textwidth}
    \centering
    \includegraphics{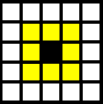}
    \caption{Moore}
    \label{fig:Moore neighboor}
    \end{minipage}
\end{figure}

\newpage
A point $r\in A^{%
%TCIMACRO{\U{2124} }%
%BeginExpansion
\mathbb{Z}
%EndExpansion
^{2}}$ is fixed point if $F(r)=r$ and his bassin of attraction is the set $%
B(r)=\{x\in A^{%
%TCIMACRO{\U{2124} }%
%BeginExpansion
\mathbb{Z}
%EndExpansion
^{2}}:\underset{n\rightarrow +\infty }{\lim }F^{n}(x)=r\}.$

A point $x$ is a periodic point of period $p$ if $F^{p}(x)=x$ and $\forall
i<p,$ $F^{i}(x)\neq x.$ If there exist an integer $m$ such that $F^{m}(x)$
is periodic then $x$ is called an ultimately periodic point.

Denote the periodic points of the shift by specially periodic points and the
points which are periodic for $F$ without being periodic for the shift by
strictly temporally periodic points \cite{periodic}.

A pattern $w$ of size $k\times l$ is called $(r,s)-$blocking with offset $%
(p,q)$ if there exist non-negative integers $p\leq k-r$ and $q\leq l-s$ such
that for all $x,y\in \lbrack w]_{(0,0)}$ and all $n\geq 0$, we have: $%
F^{n}(x)_{[p,p+r)\times \lbrack q,q+s)}=F^{n}(y)_{[p,p+r)\times \lbrack
q,q+s)}$.\newline
When the offset $(p,q)=(0,0)$ the pattern $w$ is said fully blocking.

A point $x$ is a point of equicontinuity for the cellular automaton $(A^{%
%TCIMACRO{\U{2124} }%
%BeginExpansion
\mathbb{Z}
%EndExpansion
^{2}},F)$ if 
\begin{equation*}
\forall \varepsilon >0,\exists \delta >0:\forall y,d(x,y)<\delta \Rightarrow
\forall n\in 
%TCIMACRO{\U{2115} }%
%BeginExpansion
\mathbb{N}
%EndExpansion
,d(F^{n}(x),F^{n}(y))<\varepsilon
\end{equation*}%
A cellular automaton $F$ is equicontinuous if all points are points of
equicontinuity and is almost equicontinuous if the set of equicontinuity
contain a countable intersection of dense open sets.\newline
$F$ is sensitive to the initial conditions if%
\begin{equation*}
\exists \varepsilon >0,\forall x\in A^{%
%TCIMACRO{\U{2124} }%
%BeginExpansion
\mathbb{Z}
%EndExpansion
^{2}},\forall \delta >0,\exists y:d(x,y)<\delta ,\exists n\in 
%TCIMACRO{\U{2115} }%
%BeginExpansion
\mathbb{N}
%EndExpansion
:d(F^{n}(x),F^{n}(y))\geq \varepsilon
\end{equation*}

Kurka \cite{Kurka} introduced a classification of one dimensional cellular
automata according to the equicontinuity and sensitivity.

\begin{theorem}
Let $F$ be a one dimensional cellular automaton with radius $r.$ The
following properties are equivalent:

\begin{enumerate}
\item $F$ is not sensitive.

\item $F$ admit an $r-blocking$ word.

\item $F$ is almost equicontinuous.
\end{enumerate}
\end{theorem}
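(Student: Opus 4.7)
The plan is to establish the equivalences via the cyclic chain $(2) \Rightarrow (3) \Rightarrow (1) \Rightarrow (2)$, following the standard strategy due to K\r{u}rka. Only the last implication is delicate; the other two reduce to essentially definitional arguments together with a Baire-category observation.

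For $(2) \Rightarrow (3)$, I would take an $r$-blocking word $w$ with offset $p$ and consider the set $E$ of configurations in which $w$ occurs at positions tending to $-\infty$ and to $+\infty$. The set $E$ is a countable intersection of dense open sets, since for each $N$ the configurations containing $w$ somewhere in $[N, +\infty)$ and somewhere in $(-\infty, -N]$ form an open dense cylinder union. I would then verify that each $x \in E$ is an equicontinuity point: given $\varepsilon = 2^{-k}$, pick occurrences of $w$ in $x$ bracketing the coordinates $[-k, k]$; any $y$ that agrees with $x$ on a large enough window also contains those same occurrences, and the blocking property propagates upward through iterates, forcing $F^n(x)$ and $F^n(y)$ to agree on $[-k,k]$ for every $n$. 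This shows $F$ is almost equicontinuous.

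For $(3) \Rightarrow (1)$, the implication is immediate: almost equicontinuity gives a nonempty set of equicontinuity points, and the existence of even one equicontinuity point directly contradicts the uniform-in-$x$ quantifier in the definition of sensitivity.

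The main work lies in $(1) \Rightarrow (2)$, which I would prove by contraposition: assume no word is $r$-blocking and deduce sensitivity with constant $\varepsilon = 1$ (so that any forced disagreement in the central $r$-window suffices). Given $x \in A^{\mathbb{Z}}$ and $\delta > 0$, choose $k$ with $2^{-k} < \delta$ and consider the word $u = x_{[-k,k]}$. Since $u$ is not $r$-blocking at any admissible offset $p$, for each such $p$ there exist configurations $y^{(p)}, z^{(p)} \in [u]_{-k}$ and an iterate $n_p$ with $F^{n_p}(y^{(p)})$ and $F^{n_p}(z^{(p)})$ differing at some coordinate in $[p, p+r)$; applying this with $p$ chosen so that $[p,p+r) \ni 0$ produces one configuration within $\delta$ of $x$ whose forward orbit separates from that of $x$ at coordinate $0$. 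The subtlety is that for a fixed $x$ we need disagreement on a uniformly located central window; this is handled by taking the central window to be the one where non-blocking fails for the word $u$, and by using compactness of $A^{\mathbb{Z}}$ to ensure that the witnesses $y^{(p)}$ and $n_p$ exist for arbitrarily large $k$. The hard part of the argument is arranging the quantifier order so that a single sensitivity constant works uniformly in $x$; this is where the definition of \emph{$r$-blocking} (demanding the window to have fixed width $r$ equal to the radius of $F$) is essential, because the radius $r$ controls how fast information propagates and thus guarantees that failure of the blocking condition for every word yields the required uniform escape of orbits.
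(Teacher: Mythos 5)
First, a point of context: the paper does not prove this theorem at all --- it is quoted from Kurka \cite{Kurka} as a known classification result --- so there is no in-paper argument to compare yours against. Judged on its own terms, your cyclic scheme $(2)\Rightarrow(3)\Rightarrow(1)\Rightarrow(2)$ is exactly the standard proof. The implications $(2)\Rightarrow(3)$ and $(3)\Rightarrow(1)$ are set up correctly: the residual set of configurations containing occurrences of the blocking word tending to $\pm\infty$, together with the (omitted but routine) induction showing that a blocked column of width $r$ --- equal to the radius, hence to the speed of information propagation --- shields the coordinates lying between two occurrences for all iterates, is the right argument; and $(3)\Rightarrow(1)$ is indeed immediate once the Baire category theorem turns ``residual'' into ``nonempty.''

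The one step that fails as written is the sensitivity constant in $(1)\Rightarrow(2)$. Taking $\varepsilon=1$ requires the orbit of the perturbed configuration to disagree with that of $x$ at coordinate $0$ itself, but the negation of the blocking property for $u=x_{[-k,k]}$ with the central offset only yields $y,z\in[u]_{-k}$ and some $n$ with $F^{n}(y)$ and $F^{n}(z)$ differing \emph{somewhere} in a window $[p,p+r)$ containing $0$, not at $0$. What you actually obtain is $d(F^{n}(x),F^{n}(\cdot))\geq 2^{-r}$ --- after making explicit the step that at least one of $y,z$ must disagree with $x$ on that window, which you currently gloss over --- so the correct uniform sensitivity constant is $\varepsilon=2^{-r}$, not $1$. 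This is a local fix, not a structural one. Separately, the appeal to ``compactness of $A^{\mathbb{Z}}$ to ensure that the witnesses exist for arbitrarily large $k$'' is a non sequitur: the witnesses $y$, $z$, $n$ come directly from the hypothesis that no word, in particular no $x_{[-k,k]}$, is $r$-blocking at the central offset; compactness plays no role in this implication.
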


This equivalence do not hold in higher dimension, Gamber \cite{Emilie}
proved that the implication $3\Rightarrow 1$ and $1\Rightarrow 2$ are
valaible in all dimensions and the existence of a fully blocking pattern
implies the almost equicontinuity of the cellular automaton.

\begin{theorem}[\protect\cite{Emilie}]
Let $F:A^{%
%TCIMACRO{\U{2124} }%
%BeginExpansion
\mathbb{Z}
%EndExpansion
^{D}}\rightarrow A^{%
%TCIMACRO{\U{2124} }%
%BeginExpansion
\mathbb{Z}
%EndExpansion
^{D}}$ with radius $r.$ If there exists a fully blocking pattern of size $%
k\times k\times ...\times k$ for $F$ where $k\geq r$ then $F$ is almost
equicontinuous.
\end{theorem}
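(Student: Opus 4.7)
The plan is to exhibit a dense $G_\delta$ subset of $A^{\mathbb{Z}^D}$ consisting of equicontinuity points; by the definition of almost equicontinuity this suffices.

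Fix the fully blocking pattern $w$ of size $k\times\cdots\times k$ with $k\geq r$, whose protected inner region has size $r\times\cdots\times r$ anchored at offset $0$. The defining property is that whenever a configuration $z$ satisfies $z_{[p,p+k)^D}=w$ at a site $p\in\mathbb{Z}^D$, the restriction $F^n(z)_{[p,p+r)^D}$ is the same for every $z\in[w]_p$ and every $n\geq 0$, independently of the values of $z$ outside $[p,p+k)^D$.

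The central geometric step is that, because $k\geq r$, translates of the protected $r^D$-sub-box can be assembled into a closed \emph{shell} of thickness at least $r$ around any prescribed cube. Concretely, for each $m\geq 1$ I would construct a finite set $\mathcal{F}_m\subset\mathbb{Z}^D$ of placements such that: (i) the footprints $[p,p+k)^D$ for $p\in\mathcal{F}_m$ are pairwise disjoint, and (ii) the union of the protected sub-boxes $[p,p+r)^D$ for $p\in\mathcal{F}_m$ contains every lattice point of $\mathbb{Z}^D\setminus[-m,m]^D$ whose $\|\cdot\|_\infty$-distance to $[-m,m]^D$ is at most $r$. Since $F$ has radius $r$, once both configurations agree on this entire shell, their forward orbits must agree everywhere inside the cube $[-m,m]^D$ for all times.

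Let $E_m$ be the set of configurations that carry a translate of $w$ at every site of some such $\mathcal{F}_m$; since this is a condition on finitely many coordinates, $E_m$ is a finite union of cylinders, hence clopen. Density is easy: from any cylinder $[u]$ I extend $u$ beyond its support by freely installing the desired shell of $w$-tiles far enough out to contain the support of $u$ together with $[-m,m]^D$. Thus $E:=\bigcap_m E_m$ is a dense $G_\delta$. For $x\in E$ and $\varepsilon=2^{-m}$, pick $M$ large enough that the shell of $x$ around $[-m,m]^D$ lies inside $[-M,M]^D$ and let $\delta=2^{-M}$; any $y$ with $d(x,y)<\delta$ carries the identical shell, so by the shielding argument $F^n(x)$ and $F^n(y)$ coincide on $[-m,m]^D$ for every $n\geq 0$, giving $d(F^n(x),F^n(y))<\varepsilon$. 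Hence every point of $E$ is an equicontinuity point.

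The main obstacle is the combinatorial construction of $\mathcal{F}_m$: in dimensions $D\geq 2$ the $k^D$-footprints of $w$ do not tile $\mathbb{Z}^D$ for free, and one must choose the translates carefully so that their footprints are disjoint while their smaller $r^D$ protected sub-boxes still cover the full $r$-thick shell around $[-m,m]^D$, including corners. This is precisely the place where the hypothesis $k\geq r$ is used; once this tiling lemma is in hand the topological argument is routine.
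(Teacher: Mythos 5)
The paper offers no proof of this theorem to compare against: it is quoted from Burkhead--Gamber, so your attempt has to stand on its own. Your topological skeleton --- a dense $G_\delta$ of configurations enclosed by determined shells of thickness $r$, plus induction on $n$ using the radius --- is the standard strategy and that part is sound. The fatal problem is the ``tiling lemma'' you defer to the final paragraph: under the definition of fully blocking used in this paper (only the corner sub-box $[0,r)^D$ of the $k^D$ support is protected), that lemma is not merely delicate, it is \emph{false} whenever $k>r$ and $D\geq 2$. Indeed, to cover even $r+1$ consecutive cells of a single row of your shell by translates $[p,p+r)^D$ of the protected box, some two of these translates must have anchors with $\|p-q\|_\infty\leq r$ (two of the length-$r$ column intervals must overlap or abut, and all boxes meeting that row have first coordinates within $r$ of each other); since the footprints $[p,p+k)^D$ and $[q,q+k)^D$ share the same anchors and $k>r$, they intersect. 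So requirement (i) and requirement (ii) of your $\mathcal{F}_m$ are incompatible. Dropping disjointness does not rescue the argument, because two overlapping translates of $w$ need not be simultaneously realizable: $w$ need not agree with its own translate on the overlap, so your set $E_m$ could be empty.

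The construction does close in exactly two situations: when $k=r$, where footprint and protected box coincide and $r^D$ cubes tile an enclosing annulus disjointly; and when ``fully blocking'' is read in the stronger sense of the cited source, namely that the \emph{entire} $k^D$ support is protected, in which case disjoint translates of $w$ tile a shell of thickness $k\geq r$ and your induction goes through verbatim (this stronger reading is also the one actually used later in the paper, where the single letter $3$ is a $1\times 1$ pattern whose whole support is protected). You should either restrict to one of these cases or supply a genuinely new idea for the corner-protected case with $k>r$; as written the proof is incomplete at its load-bearing step. A smaller repair: $E_m$ must be defined as ``$x$ carries a $w$-shell around \emph{some} cube containing $[-m,m]^D$'' (a union of clopen sets over all radii); with the shell pinned to $[-m,m]^D$ it is not dense, since a cylinder whose support meets that shell region may be incompatible with $w$. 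Your density argument already uses the corrected version implicitly.
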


\section{A cellular automaton model for language shift in Algeria}

In \cite{Chemlal}, the author presented a cellular automaton model of the
language shift in Algeria where the two dominants languages are the Algerian
Arabic and the English. An individual can shift from Amazigh to Arabic and/or from French to English states.
\newline
Four states are
possible:

\begin{enumerate}
\item State $^{\prime }00^{\prime }$: Person speak Amazigh and use
French as second language.

\item State $^{\prime }01^{\prime }$: Person speak  Amazigh and use
English as second language.

\item State $^{\prime }10^{\prime }$: Person speak Arabic and use French
as second language.

\item State $^{\prime }11^{\prime }$: Person speak Arabic and use English as
second language.
\end{enumerate}

Each state $(mn)$ is coded by an integer $p$ such that $p=n\times
2^{0}+m\times 2^{1}$.%
 \begin{table}[h]
     \centering
    \begin{tabular}{|c|c|c|c|}
     \hline
     $00$  &  $01$   &   $10$  &  $11$ \\
     \hline
     $0$ & $1$ & $2$ & $3$ \\
     \hline 
    \end{tabular}
    \caption{Encoding the four states}
    \label{tab:my_label}
\end{table}

\begin{notation}
Let $x$ be a point in $A^{%
%TCIMACRO{\U{2124} }%
%BeginExpansion
\mathbb{Z}
%EndExpansion
^{2}}.$

\begin{enumerate}
\item $\sum\nolimits_{0_{i,j}}=\sum\limits_{r=-1}^{1}\sum\limits_{s=-1}^{1}%
\left[ \frac{x_{i+r,j+s}}{2}\right] $ where $\left[ {}\right] $ is the
integer part.

\item $\sum\nolimits_{1_{i,j}}=\sum\limits_{r=-1}^{1}\sum%
\limits_{s=-1}^{1}(x_{i+r,j+s}-2.\left[ \frac{x_{i+r,j+s}}{2}\right] )$
where $\left[ {}\right] $ is the integer part.

\item $0\leq P_{z}\leq 9$ and $0\leq P_{e}\leq 9$ are the parameters
defining the pressure toward maintaining the Amazigh and the Algerian Arabic
respectively.
\end{enumerate}
\end{notation}
\newpage

Consider the alphabet $A=\{0,1,2,3\}$. The local rule of the cellular
automaton $K:A^{%
%TCIMACRO{\U{2124} }%
%BeginExpansion
\mathbb{Z}
%EndExpansion
^{2}}\rightarrow A^{%
%TCIMACRO{\U{2124} }%
%BeginExpansion
\mathbb{Z}
%EndExpansion
^{2}}$ that model the language shift in Algeria is summarized in the
following figure:
\begin{figure}[h]
    \centering
    \includegraphics[width=\textwidth]{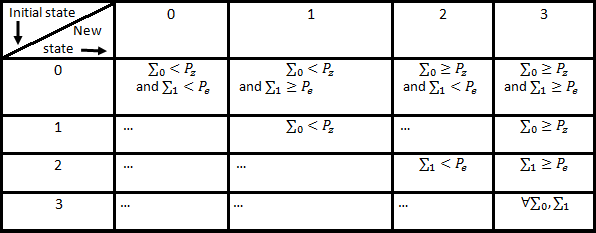}
    \caption{The local rule}
    \label{fig:my_label}
\end{figure}

\subsection{Dynamical properties}

In this section, we give some dynamical properties of the cellular
automaton $K$ seen as a dynamical system.

\begin{proposition}
For $(P_{z},P_{e})=(9,9)$ all points from $(A^{%
%TCIMACRO{\U{2124} }%
%BeginExpansion
\mathbb{Z}
%EndExpansion
^{2}},K)$ are fixed points.
\end{proposition}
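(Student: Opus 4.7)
The plan is to show that the local rule $f$ satisfies $f(x_{m+\Bbbk})=x_m$ for every central cell value $x_m$ and every admissible neighborhood pattern, when $(P_z,P_e)=(9,9)$; since this immediately gives $K(x)=x$ for every configuration $x\in A^{\mathbb{Z}^2}$, the proposition follows.

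The first step is to unpack what the sums $\sum_{0_{i,j}}$ and $\sum_{1_{i,j}}$ actually measure in the $3\times 3$ Moore neighborhood. Because $[x/2]=1$ precisely when $x\in\{2,3\}$ and $x-2[x/2]=1$ precisely when $x\in\{1,3\}$, the quantity $\sum_{0_{i,j}}$ counts the number of Arabic-speaking neighbors (states $2,3$) while $\sum_{1_{i,j}}$ counts the number of English-using neighbors (states $1,3$). Both sums range over the nine cells of the neighborhood, hence $0\le \sum_{0_{i,j}},\sum_{1_{i,j}}\le 9$. This is the key numerical fact that makes the extremal values $P_z=P_e=9$ special: the pressures saturate the upper bounds available to the sums.

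Next, I would work through the four possible values of the central cell, using the transition diagram of Figure~3. For a cell in state $0$ or $1$ (Amazigh), the local rule lets it shift to the Arabic branch only when $\sum_{0_{i,j}}$ strictly exceeds the retention threshold $P_z$; but with $P_z=9$ this condition can never be met since $\sum_{0_{i,j}}\le 9$. The same argument applied to the English/French axis, using $\sum_{1_{i,j}}\le 9=P_e$, prevents any shift between the second-language coordinates. By symmetry of the rule, cells already in the dominant states ($2$ and $3$) are themselves equilibria of the rule. Therefore, in every one of the four cases, the output of $f$ equals the current central value.

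The main obstacle — and really the only thing that requires care — is verifying from the local rule figure that each of the four transition conditions is of the ``strict majority above threshold $P_z$ (resp.\ $P_e$)'' form, with strict (rather than non-strict) inequality at the value $9$, so that the saturated value actually blocks every transition. Once this is read off the diagram, the conclusion is a direct case check. Finally, since $f$ acts as the identity on every admissible neighborhood pattern, $K(x)_m=x_m$ for all $m\in\mathbb{Z}^2$ and all $x\in A^{\mathbb{Z}^2}$, which is exactly the claim that every configuration is a fixed point of $K$.
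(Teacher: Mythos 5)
Your overall strategy --- a case analysis on the value of the central cell, bounding the neighborhood sums so that no transition condition can fire when the pressures are maximal --- is the same as the paper's. However, there is a genuine gap at exactly the point you flagged as ``the only thing that requires care,'' and you resolved it the wrong way. You bound both sums by $9$ (all nine cells of the Moore neighborhood, center included) and then rely on the transition condition being a \emph{strict} inequality $\sum > P_z$, so that $\sum \le 9$ blocks it. The paper's own usage indicates the condition is non-strict: in the proof of non-surjectivity the retention condition is written $\sum_{0}(x_{[-1,1]\times[-1,1]}) < P_z$, i.e.\ a cell shifts as soon as the relevant sum reaches $P_z$. Under that reading your bound $\sum \le 9$ does not exclude the shift at $\sum = 9$, and your argument does not close.

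The observation that actually closes it --- and the one the paper's proof uses --- is that the central cell contributes $0$ to the relevant sum in every non-trivial case: if $x_{(i,j)}\in\{0,1\}$ then $\left[\frac{x_{(i,j)}}{2}\right]=0$, so $\sum_{0_{i,j}}\le 8$; if $x_{(i,j)}\in\{0,2\}$ then $x_{(i,j)}-2\left[\frac{x_{(i,j)}}{2}\right]=0$, so $\sum_{1_{i,j}}\le 8$. Hence the sum governing any possible transition out of the current state is at most $8<9$, which blocks the transition whether the threshold inequality is strict or not; the central value $3$ is absorbing and needs no argument. Note also that your remark that states $2$ and $3$ are ``equilibria by symmetry'' is imprecise: state $2$ is not an equilibrium of the rule in general (it can shift to $3$ along the second-language axis when $\sum_{1}\ge P_e$); it is fixed here only because the same $\sum_{1}\le 8 < 9 = P_e$ bound applies. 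With these corrections your proof coincides with the paper's.
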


\begin{proof}
\ \ \ \ \ \ \newline
Suppose that $(P_{z},P_{e})=(9,9)$ and take a point $y\in $ $A^{%
%TCIMACRO{\U{2124} }%
%BeginExpansion
\mathbb{Z}
%EndExpansion
^{2}}$. Let $(i,j)$ $\in 
%TCIMACRO{\U{2124} }%
%BeginExpansion
\mathbb{Z}
%EndExpansion
^{2}$.\newline
If $y_{(i,j)}=0$ then $0\leq \sum_{0}(y_{[-i,i]\times \lbrack -i,i]})\leq 8$
and $0\leq \sum_{1}(y_{[-i,i]\times \lbrack -i,i]})\leq 8$. 
So, $%
K(y)_{(i,j)}=0.$\newline
If $y_{(i,j)}=1$ then $0\leq \sum_{0}(y_{[-i,i]\times \lbrack -i,i]})\leq 8$%
. So, $K(y)_{(i,j)}=1$\newline
If $y_{(i,j)}=2$ then $0\leq \sum_{1}(y_{[-i,i]\times \lbrack -i,i]})\leq 8$%
.So, $K(y)_{(i,j)}=2.$\newline
The case when $y_{(i,j)}=3$ is trivial.
\end{proof}

\begin{proposition}
For any values of the parameters $P_{z},P_{e}$ different from $(9,9),$ the
cellular automaton $K$ is not surjective.
\end{proposition}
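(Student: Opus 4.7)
The plan is to show that $K$ fails to be pre-injective and then invoke the Moore--Myhill Garden of Eden theorem, which applies since $\mathbb{Z}^{2}$ is amenable and the alphabet $A=\{0,1,2,3\}$ is finite. Because $(P_{z},P_{e})\neq(9,9)$ at least one of $P_{z}\leq 8$ or $P_{e}\leq 8$ holds; the local rule is symmetric in the two bits of the state, so it suffices to treat $P_{z}\leq 8$ (the other case goes through verbatim after replacing state $1$ by state $2$ throughout).

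I would compare the constant configuration $x\equiv 3$ with the configuration $x'$ that agrees with $x$ everywhere except $x'_{(0,0)}=1$. Clearly $K(x)\equiv 3$, since state $3$ (Arabic--English) is a fixed state of the local rule, as noted in the trivial case of the previous proposition. For $K(x')$ I would split $\mathbb{Z}^{2}$ into three regions: (i) for $(i,j)$ with $\max(|i|,|j|)\geq 2$, the Moore neighborhood of $(i,j)$ does not contain $(0,0)$, so $K(x')_{(i,j)}=3$; (ii) the eight cells with $\max(|i|,|j|)\leq 1$ and $(i,j)\neq(0,0)$ still have central state $3$ in $x'$, which remains at $3$; (iii) at $(0,0)$ the central state is $1$ while the eight neighbors are all $3$, so $\sum_{0_{(0,0)}}(x')=8$, and since $P_{z}\leq 8$ this triggers the Amazigh$\to$Arabic transition of the local rule, mapping $1$ to $3$. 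Hence $K(x')\equiv 3=K(x)$ while $x\neq x'$, so $K$ is not pre-injective, and the Moore--Myhill theorem yields non-surjectivity.

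The one delicate point is confirming from the local-rule figure that the threshold for the Amazigh$\to$Arabic transition is $\sum_{0}\geq P_{z}$ rather than a strict inequality. This reading is essentially forced: a strict threshold would leave every Amazigh-centered cell fixed whenever $P_{z}=8$ (because $\sum_{0}\leq 8$ in that case), and the present proposition would then already fail at $(P_{z},P_{e})=(8,9)$. Once that observation is in place, step (iii) is immediate and the argument is complete.
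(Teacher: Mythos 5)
Your argument is correct, but it reaches non-surjectivity by a genuinely different route than the paper. The paper works directly with orphans: for $P_{z}=0$ or $P_{e}=0$ it observes that $0^{\infty \times \infty }$ has no preimage, and for the remaining parameter values it builds an explicit configuration $y$ (a single $0$ ringed by $3$'s inside a sea of $0$'s) and runs a case analysis over $P_{z}\in \{1,\dots ,8\}$ and then $P_{z}=9$, $P_{e}\leq 8$ to show $y$ has no preimage. You instead exhibit two configurations differing in exactly one cell --- $3^{\infty \times \infty }$ versus the same configuration with a $1$ (or, symmetrically, a $2$) at the origin --- with equal images, i.e.\ a failure of pre-injectivity, and conclude by the Moore--Myhill Garden of Eden theorem on $\mathbb{Z}^{2}$. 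Your reading of the local rule (the bit flips when $\sum_{0}\geq P_{z}$, resp.\ $\sum_{1}\geq P_{e}$, and state $3$ is absorbing) is exactly the reading the paper itself uses in its other proofs, so the key computation $\sum_{0_{(0,0)}}(x')=8\geq P_{z}$ and the conclusion $K(x)=K(x')=3^{\infty \times \infty }$ are sound; your side remark that a strict threshold would make $K$ the identity at $(P_{z},P_{e})=(8,9)$ is a nice consistency check. The trade-off: your proof is shorter, uniform in the parameters, and avoids the paper's somewhat delicate case analysis, at the cost of invoking a nontrivial (though entirely standard) theorem and of being non-constructive --- it does not produce a concrete configuration without preimage, which the paper's argument does and which the authors implicitly lean on later when they note that their preimage-search program can return an empty list.
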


\begin{proof}
Consider $P_{z}=0$ or $P_{e}=0$. For all $x\in A^{%
%TCIMACRO{\U{2124} }%
%BeginExpansion
\mathbb{Z}
%EndExpansion
^{2}}$ and all $k\in 
%TCIMACRO{\U{2124} }%
%BeginExpansion
\mathbb{Z}
%EndExpansion
$ we have: \newline
$\sum_{0}(x_{[k-1,k+1]\times \lbrack k-1,k+1]})\geq 0$ or $%
\sum_{1}(x_{[k-1,k+1]\times \lbrack k-1,k+1]})\geq 0$ what means that $%
K(x)_{k}\in \{2,3\}$ or $K(x)_{k}\in \{1,3\}$ then the point $0^{\infty
\times \infty }$ do not have an antecedent. Now,Let take the point $y\in A^{%
%TCIMACRO{\U{2124} }%
%BeginExpansion
\mathbb{Z}
%EndExpansion
^{2}}$ defined as follow:%
\begin{equation*}
\left\{ 
\begin{array}{c}
y_{00}=0 \\ 
\forall i\in 
%TCIMACRO{\U{2124} }%
%BeginExpansion
\mathbb{Z}
%EndExpansion
^{2}:\left\Vert \overrightarrow{i}\right\Vert _{\infty }=1,y_{%
\overrightarrow{i}}=3 \\ 
0\text{ otherwise}%
\end{array}%
\right.
\end{equation*}%
Assume by contradiction that there is a point $x\in A^{%
%TCIMACRO{\U{2124} }%
%BeginExpansion
\mathbb{Z}
%EndExpansion
^{2}}$ such that $K(x)=y$ then by construction, the letter $0$ can either be
turned to an other letter from $A$ different from $0$ or be fixed. So we
have $x_{00}=0$ and%
\begin{equation*}
\forall i\in 
%TCIMACRO{\U{2124} }%
%BeginExpansion
\mathbb{Z}
%EndExpansion
^{2}:\left\Vert \overrightarrow{i}\right\Vert _{\infty }>1,x_{%
\overrightarrow{i}}=0
\end{equation*}%
In the following, we show that for any value of $P_{z}$ in $\{0,...,8\}$ the
point $y$ do not have an a preimage by $K$ independently from the value of $%
P_{e}.$

\begin{itemize}
\item If $P_{z}=1$ then to keep $x_{00}=0$ the elements of $x$ at the
positions $i\in 
%TCIMACRO{\U{2124} }%
%BeginExpansion
\mathbb{Z}
%EndExpansion
^{2}$ such that $||\overrightarrow{i}||=1$ have to belong in $\{0,1\}$ and
whatever the number of $1^{\prime }s$ that $x$ contains on this positions we
will get $K(x)\neq $ $y$.

\item If $P_{z}=2$ then the elements of $x$ at the rest of the undetermined
positions must have at most one $2$ or one $3$ and all possibilities give $%
K(x)\neq y.$

\item If $P_{z}=3$ then at most there exist two integers $i_{1},i_{2}\in 
%TCIMACRO{\U{2124} }%
%BeginExpansion
\mathbb{Z}
%EndExpansion
^{2}$ such that $\left\Vert \overrightarrow{i_{j}}\right\Vert _{\infty
}=1,\forall j\in \{1,2\}$ and $x_{i_{j}}\in \{2,3\}$. Similarly we get $%
K(x)\neq y$.

\item If $4\leq P_{z}\leq 8$ then in order to have $K(x)_{||\overrightarrow{i%
}||_{\infty }=1}=3$, $x$ must have a $3$ at positions $i\in 
%TCIMACRO{\U{2124} }%
%BeginExpansion
\mathbb{Z}
%EndExpansion
^{2}$ such that$||\overrightarrow{i}||_{\infty }=1$. This is a
contradiction with the fact that $\sum_{0}(x_{[-1,1]\times \lbrack
-1,1]})<P_{z}$.\newline

Using same arguments one can show that for $P_{z}=9$ and any value of $%
P_{e}$ from $\{0,...,8\}$ the same point $y$ do not have a preimage.
\end{itemize}
\end{proof}

\begin{proposition}
For all parameters $0\leq P_{z},P_{e}\leq 9$, $K$ do not have any strictly
periodic point.
\end{proposition}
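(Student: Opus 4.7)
The plan is to exploit the built-in monotonicity of the language-shift model: under $K$, the state of any cell can only evolve in one direction. Encoding each value $p\in\{0,1,2,3\}$ as a bit pair $(m,n)$ with $p=2m+n$ and equipping $A$ with the componentwise partial order $\preceq$ on such pairs, the modelling assumption is that an individual may shift from Amazigh to Arabic (first bit: $0\to 1$) or from French to English (second bit: $0\to 1$), but never the reverse. I would formalise this as the following key lemma: for every $x\in A^{\mathbb{Z}^2}$ and every coordinate $(i,j)$, one has $x_{(i,j)}\preceq K(x)_{(i,j)}$.

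The lemma is verified by a short case analysis on $x_{(i,j)}\in\{0,1,2,3\}$, reading off the table in Figure~3. State $3$ is absorbing; from state $1$ only values in $\{1,3\}$ are reachable; from state $2$ only values in $\{2,3\}$ are reachable; and state $0$ may move to any element of $A$. In every case neither bit is ever flipped from $1$ to $0$. The pressures $P_z,P_e$ and the counting sums $\sum_{0}, \sum_{1}$ determine \emph{whether} a shift occurs, not its direction, so the monotonicity of $K$ holds uniformly in the parameters.

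A trivial induction then yields $x_{(i,j)}\preceq K^n(x)_{(i,j)}$ for every $n\geq 0$ and every $(i,j)$. If $x$ were a periodic point of minimal period $p\geq 2$, the chain $x_{(i,j)}\preceq K(x)_{(i,j)}\preceq \cdots \preceq K^p(x)_{(i,j)}=x_{(i,j)}$ would collapse to equalities at every coordinate, forcing $K(x)=x$ and contradicting the requirement $K(x)\neq x$. Hence no strictly periodic point exists for any $(P_z,P_e)$. The only delicate step is the initial case check, which is routine but requires a careful read of the local rule; once the monotonicity lemma is established, the conclusion is entirely formal and works uniformly over all admissible parameter values.
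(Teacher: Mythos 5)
Your proof is correct and follows essentially the same route as the paper: the paper's argument rests on the observation that the letters $0,1,2,3$ can only transition along an acyclic schema (Figure~\ref{1}), which is exactly the monotonicity $x_{(i,j)}\preceq K(x)_{(i,j)}$ you formalise with the componentwise order on bit pairs. Your version merely makes explicit, via the antisymmetry of $\preceq$ and the collapsing chain $x_{(i,j)}\preceq\cdots\preceq K^{p}(x)_{(i,j)}=x_{(i,j)}$, the step the paper leaves to the reader.
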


\begin{proof}
Use contradiction and suppose that there exist a point $x\in A^{%
%TCIMACRO{\U{2124} }%
%BeginExpansion
\mathbb{Z}
%EndExpansion
^{2}}$ which is strictly periodic for $K$, i.e, $\exists p\in 
%TCIMACRO{\U{2115} }%
%BeginExpansion
\mathbb{N}
%EndExpansion
:K^{p}(x)=x$ and $\forall i<p$ $K^{i}(x)\neq x.$\newline
As: $\forall i<p$ $K^{i}(x)\neq x$, without loss of generalities $\exists
k\in 
%TCIMACRO{\U{2124} }%
%BeginExpansion
\mathbb{Z}
%EndExpansion
$ st. $K^{i}(x)_{kk}\neq x_{kk}.$\newline
Or, by construction the letters $0,1,2,3$ turn by following the schema in
figure \ref{1} and this is a contradiction.
\begin{figure}[h]
    \centering
    \includegraphics[width=4.5cm]{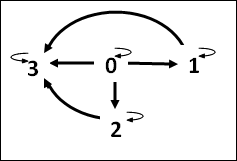}
    \caption{Letters behavior}
    \label{1}
\end{figure}
\end{proof}

\begin{proposition}
For all parameters $0\leq P_{z},P_{e}\leq 9$, $k$ is almost equicontinuous.
\end{proposition}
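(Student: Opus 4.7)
The plan is to invoke Theorem 2 (Gamber) and avoid dealing with equicontinuity directly. Since $K$ uses the Moore neighborhood, its radius is $r=1$, so I only need to exhibit a fully blocking pattern of some size $k\times k$ with $k\geq 1$. The natural candidate, suggested by the letter-behavior diagram (Figure \ref{1}) that already drove the previous proposition, is a pattern built entirely out of the symbol $3$.

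The core observation I would establish first is that state $3$ is absorbing for $K$, independently of $P_z$ and $P_e$: for every configuration $x$ and every site $(i,j)$, if $x_{(i,j)}=3$ then $K(x)_{(i,j)}=3$. This is a direct reading of the local rule (Figure \ref{fig:my_label}): a cell already coding ``Arabic+English'' has undergone both available shifts, and no branch of the rule sends $3$ to anything other than $3$. The same absorbency is already visible in Figure \ref{1}, where $3$ is the unique sink of the four-letter transition graph; Propositions 3 and 5 implicitly rely on this fact.

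Once absorbency is in hand, I take $w$ to be the pattern of size $1\times 1$ given by the single symbol $3$, placed at the origin. For any $x,y\in [w]_{(0,0)}$ we have $x_{(0,0)}=y_{(0,0)}=3$, and by induction on $n$ the absorbency property yields $K^n(x)_{(0,0)}=3=K^n(y)_{(0,0)}$ for every $n\geq 0$. Thus $w$ is fully blocking with $(p,q)=(0,0)$ and $(r,s)=(1,1)$. If one prefers a pattern whose size matches the diameter of the Moore neighborhood, the same argument works verbatim with the $3\times 3$ pattern all of whose entries equal $3$, which is fully blocking by looking at its central cell. In either case the size condition $k\geq r=1$ of Theorem 2 is satisfied.

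The real content of the argument is therefore confined to verifying absorbency of the state $3$, which is a finite case check on the local rule and not a genuine obstacle. Granted this, Theorem 2 applies directly and concludes that $K$ is almost equicontinuous for every admissible pair $(P_z,P_e)$. No further use of the parameters is needed, because the absorbency of $3$ is parameter-free.
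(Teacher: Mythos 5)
Your proposal is correct and follows essentially the same route as the paper: both arguments rest on the absorbency of the state $3$ under the local rule (independently of $P_z$ and $P_e$), exhibit the single-symbol pattern $3$ as a fully $(1,1)$-blocking pattern, and conclude via Theorem 2. Your additional verification that the size condition $k\geq r=1$ is met, and the optional $3\times 3$ variant, are harmless elaborations of the same idea.
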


\begin{proof}
By definition of the local rule of $K$, $3$ is a fully $(1,1)$-blocking word
for any values of the parameters $P_{z}$ and $P_{e}$. It follow from the
theorem $2$ that $K$ is almost equicontinuous.
\end{proof}

\subsection{Bassin of attraction of fixed points}

In this section we will focus on the bassin of attraction of fixed points.%
\newline
The cellular automaton $K$ admits an infinity of fixed points. Excepting the point $3^{\infty \times \infty
},$ the bassin of attraction of each fixed point do not contain any cylinder for all values of the
parameters $P_{z}$ and $P_{e}$. It has been proven in 
\cite{Chemlal} that the bassin of attraction of $3^{\infty \times \infty }$
contain the cylinder $[3]$ for $(P_{z},P_{e})=(9,9)$ and do not contain any
cylinder for $(P_{z},P_{e})=(1,1).$ In the following we investigate the
bassin of attraction of this point for other values of $P_{z}$ and $P_{e}$
differents from $(1,1)$ and $(9,9).$

\subsubsection{Computer simulation}

In order to characterize the bassin of attraction of a fixed point for some
values of $P_{z}$ and $P_{e}.$ We have a program that receives any pattern $%
P $ of any size $n\times m$ and a fixed values of $P_{z},P_{e}$ and returns
a list of all the possible patterns $P^{\prime }$ of size $(n+2)\times (m+2)$
such that $K(P^{\prime })=P$.

As the cellular automaton $K$ is not surjective then there is some cases where the program returns an empty list. Inside the program we needed to generalize 
all the possible $3\times 3$ patterns that we can form by
the numbers $0,1,2$ and $3$. To do that we used a command that returns those patterns as a set, what means that for example instead to
get in total $9$ patterns that contain one $1$ and $0^{\prime }s$ otherwise
we had just one pattern. That do not affect our results because it is enough
to shift the one obtained pattern in all directions. 

As the bassin of attraction of a fixed point consist in the set of points that
converges under the iterations of the cellular automaton $K$ to the fixed
point. It may be infinite then our program cannot caracterise the whole bassin but give enough information to be able to deduce its properties. 

The idea is that the program receives a pattern $P$ that belong to $%
3^{\infty \times \infty }$ and a particular values of $P_{z}$ and $P_{e}$
and give the list of preimages of $P$ when it is possible. After that some elements are randomly chosen and the process is repeated again.

\begin{example}
Consider $P=\left( 
\begin{array}{cc}
3 & 3 \\ 
3 & 3%
\end{array}%
\right) $ and $(P_{z},P_{e})=(2,2)$.\newline
We take randomly one pattern that belong to the list of all possible preimages of $K^{-1}(P)$ and re-execute the program with the new taken pattern. 
We repeat the process many time as we need. If never at step $k+1$ we get back an empty list then we chose an other pattern from the list of $K^{-k}(P)$ until either we get a pattern with preimages or we try all the obtained pattern from the precedent list. In the following we present some of the pattern that we had obtained:
\begin{figure}[h]
    \centering
    \begin{subfigure}[b]{0.3\textwidth}
    \centering
    \includegraphics[width=3cm]{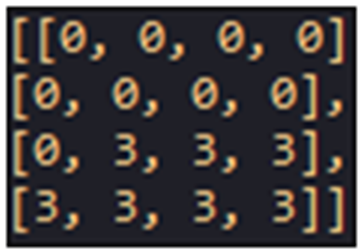}
     \caption{$K^{-1}(P)$}
     \label{fig:Neumann neighboor}  
     \end{subfigure}
     \hfill
    \begin{subfigure}[b]{0.3\textwidth}
    \centering
    \includegraphics[width=4cm]{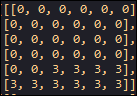}
    \caption{$K^{-2}(P)$}
    \label{fig:Moore neighboor}
    \end{subfigure}
    \hfill
    \begin{subfigure}[b]{0.3\textwidth}
    \centering
    \includegraphics[width=5cm,height=3.5cm]{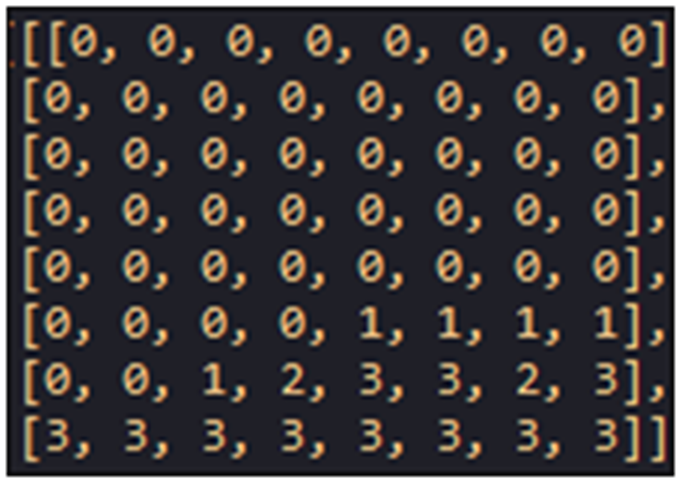}
    \caption{$K^{-3}(P)$}
    \label{fig:Moore neighboor}
    \end{subfigure}
\end{figure}
\newline 
From the process we observed that all the preimages of the pattern $P$ contain at least one of
the following patterns $33$ or $%
\begin{array}{c}
3 \\ 
3%
\end{array}%
.$
\newline
Now, Consider 
 $P=(%
\begin{array}{ccc}
3 & 3 & 3 \\ 
3 & 3 & 3 \\ 
3 & 3 & 3%
\end{array}%
)$ and $(P_{z},P_{e})=(3,3)$ then we will get the following results:%

\begin{figure}[h]
    \centering
    \begin{subfigure}[b]{0.49\textwidth}
    \centering
    \includegraphics[width=4cm]{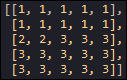}
    \caption{$K^{-1}(P)$}
    \label{fig:Moore neighboor}
    \end{subfigure}
    \hfill
    \begin{subfigure}[b]{0.49\textwidth}
    \centering
    \includegraphics[width=5cm,height=3.6cm]{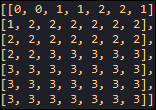}
    \caption{$K^{-2}(P)$}
    \label{fig:Moore neighboor}
    \end{subfigure}
\end{figure}
\begin{figure}[h]
    \centering
    \begin{subfigure}[b]{0.49\textwidth}
    \centering
    \includegraphics[width=4cm]{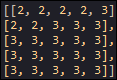}
    \caption{$K^{-1}(P)$}
    \label{fig:Moore neighboor}
    \end{subfigure}
    \hfill
    \begin{subfigure}[b]{0.49\textwidth}
    \centering
    \includegraphics[width=5cm,height=3.5cm]{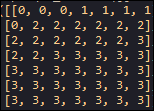}
    \caption{$K^{-2}(P)$}
    \label{fig:Moore neighboor}
    \end{subfigure}
\end{figure}
We remark that all the preimages of the pattern $P$ contain at least one of
the following patterns $333$,
$
\begin{array}{c}
3 \\ 
3 \\ 
3 \\
\end{array}
$
\end{example}

\begin{proposition}
\ \ \ \ \ \ \ \ \ \newline
The bassin of attraction of the fixed point $3^{\infty \times \infty }$
contain the cylinders $[33],
[\begin{array}{c}
3 \\ 
3  \\ 
\end{array}]$ for $(P_{z},P_{e})=(2,2)$
and contain the cylinders $[333]$,
$[\begin{array}{c}
     3 \\
     3 \\
     3 \\
\end{array}]$ for
$(P_{z},P_{e})=(3,3)$.
\end{proposition}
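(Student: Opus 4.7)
The plan rests on two immediate consequences of the local rule. First, state $3$ is absorbing: as already used in the proof of Proposition~3, the rule only lets the first digit shift from $0$ to $1$ and the second from $0$ to $1$, so once a cell reaches $3=(1,1)$ it stays there. Second, any cell whose Moore neighbourhood satisfies $\sum_0\geq P_z$ and $\sum_1\geq P_e$ is mapped to $3$ regardless of its current state, and since a neighbour in state $3$ contributes $1$ to each of $\sum_0$ and $\sum_1$, it suffices to have at least $\max(P_z,P_e)$ neighbours in state $3$. Setting $A_t=\{i\in\mathbb{Z}^2:K^t(x)_i=3\}$ for a given starting point $x$, the sequence $(A_t)$ is therefore monotone, and convergence $K^t(x)\to 3^{\infty\times\infty}$ in the product topology is equivalent to $\bigcup_t A_t=\mathbb{Z}^2$. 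The rule also commutes with a $90^\circ$ rotation (the sums $\sum_0,\sum_1$ live on the rotation-invariant Moore neighbourhood), so the vertical seeds reduce to the horizontal ones.

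For $(P_z,P_e)=(2,2)$ and $x\in[33]_{(0,0)}$ the threshold is two, and a direct enumeration of the six cells whose Moore neighbourhood contains both of the initial $3$'s shows that $A_1$ already contains a $2\times 3$ rectangle around the seed. I would then prove the following growth lemma by induction: whenever $A_t$ contains a rectangle of size at least $2\times 2$, $A_{t+2}$ contains that rectangle enlarged by $1$ in each of the four cardinal directions. In the intermediate step the four sides extend (a cell just outside a side sees at least two of the rectangle's cells); in the next step the four open corners are filled, since for instance the corner cell $(a-1,c-1)$ then sees the three $3$'s at $(a-1,c)$, $(a,c-1)$ and $(a,c)$ that were produced in the previous step. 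Iterating, the rectangle grows by $1$ in every direction every two steps, so $\bigcup_t A_t=\mathbb{Z}^2$.

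For $(P_z,P_e)=(3,3)$ and $x\in[333]_{(0,0)}$ the threshold is three, and only the two cells directly above and below the middle $3$ initially see all three $3$'s of the seed, producing a plus-shaped $A_1$. A second direct check then upgrades this plus to the $3\times 3$ square containing the seed, since each cell just above or below an end of the row now has exactly three members of $A_1$ in its neighbourhood. From a rectangle of size at least $3\times 3$ the analogous induction proceeds in three waves rather than two: the first wave extends the relative interiors of the four sides, the second fills the endpoints of those sides, and the third closes the four corners, each new cell always inheriting its three required helpers from the preceding wave. Hence the rectangle grows by $1$ in every direction every three steps, and again $\bigcup_t A_t=\mathbb{Z}^2$.

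The main subtlety is the $(3,3)$ case: because a single step never covers every cell immediately adjacent to the rectangle, one has to check that no corner remains permanently uncovered, which is what forces the three-wave structure and the starting hypothesis of size at least $3\times 3$ (first achieved at $t=2$). Once the geometric coverage is established, metric convergence is automatic: if $[-n,n]^2\subseteq A_t$ then $d(K^t(x),3^{\infty\times\infty})\leq 2^{-n}$.
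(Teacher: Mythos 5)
Your proof is correct and follows essentially the same strategy as the paper: track the monotone growth of the set of cells in state $3$ and show it eventually covers every finite window, the vertical cylinders following by symmetry. In fact your version supplies exactly the details the paper leaves implicit --- the absorbing/threshold properties of state $3$, the explicit two-wave (resp.\ three-wave) growth lemma that justifies the paper's bare ``by induction,'' and the final reduction of metric convergence to coverage --- and your $(3,3)$ growth rate (one layer per three steps once a $3\times 3$ block is reached) is consistent with, and slightly sharper than, the paper's $K^{5n}$ bound.
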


\begin{proof}
\ \ \ \ \ \ \ \ \ \ 

\begin{enumerate}
\item Consider $(P_{z},P_{e})=(2,2)$ and let $x$ be a point in $[33].$%
\newline
Without loss of generalities we suppose that $x_{-10}=x_{00}=3.$\newline
We have $K^{2}(x)_{[-1,1]\times \lbrack -2,1]}=3^{3\times 4}.$\newline
If we suppose that $K^{2n}(x)_{[-n,n]\times \lbrack
-2-n+1,n]}=3^{(2n+1)\times (2n+2)}$ then by induction we will have$\underset{%
n\rightarrow +\infty }{\lim }K^{2n}(x)=3^{\infty \times \infty }.$

\item Suppose that $(P_{z},P_{e})=(3,3)$ and consider a point $x$ in $%
[333]_{-10}.$ \newline
We have $K^{5}(x)_{[-2,2]\times \lbrack -2,2]}=3^{5\times 5}$ if we suppose
that \newline
$K^{5n}(x)_{[-n,n]\times \lbrack -n,n]}=3^{(2n+1)\times (2n+1)}$ then by
induction we will have\newline
$\underset{n\rightarrow +\infty }{\lim }K^{5n}(x)=3^{\infty \times \infty }.$%
\end{enumerate}
\end{proof}

\begin{proposition}
If $P_{z}$ or $P_{e}$ are greater than $4$ then the bassin of attraction of
the fixed point $3^{\infty \times \infty }$ do not contain any cylinder.
\end{proposition}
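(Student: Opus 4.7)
By the symmetry between the two bits of the state, it suffices to treat the case $P_z\geq 5$; the case $P_e\geq 5$ follows by swapping the roles of the two bits, of $\sum_{0}$ and $\sum_{1}$, and of $P_z$ and $P_e$. The local rule of $K$ is bit-monotone, as reflected by the schema of figure~\ref{1}: the first bit of a cell can only flip from $0$ to $1$ when $\sum_{0}\geq P_z$ at that cell, and never flips back. I will exploit this by showing that, for every cylinder $[w]_{(i,j)}$, the extension of $w$ by $0$'s outside the pattern's support produces a point of $[w]_{(i,j)}$ whose orbit stays bounded away from $3^{\infty\times\infty}$.

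\textbf{Main argument.} Let $R=[i,i+|w|_1-1]\times[j,j+|w|_2-1]$ be the support of the pattern and define $x\in A^{\mathbb{Z}^2}$ by $x|_R=w$ and $x|_{R^c}=0$, so that $x\in[w]_{(i,j)}$. The key claim is that for all $n\geq 0$ and all $(a,b)\notin R$, the first bit of $K^n(x)_{(a,b)}$ equals $0$; I prove this by induction on $n$. The base case $n=0$ is immediate since $x_{(a,b)}=0$. For the inductive step I rely on the elementary geometric fact that when $(a,b)\notin R$ the $3\times 3$ Moore neighborhood $N_{(a,b)}$ meets the rectangle $R$ in at most three cells (a short case analysis according to whether $(a,b)$ is edge-adjacent to $R$, corner-adjacent to $R$, or farther, gives at most $3$, at most $1$, and $0$ contributions respectively). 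By the inductive hypothesis the cells of $N_{(a,b)}\setminus R$ all have first bit $0$ in $K^n(x)$, so $\sum_{0}(K^n(x))_{(a,b)}\leq |N_{(a,b)}\cap R|\leq 3<P_z$, and the first bit of $K^{n+1}(x)_{(a,b)}$ must still be $0$.

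\textbf{Conclusion and main obstacle.} Picking any fixed $(a,b)\notin R$, the value $K^n(x)_{(a,b)}$ lies in $\{0,1\}$ for every $n$, so $K^n(x)$ and $3^{\infty\times\infty}$ disagree at the cell $(a,b)$ for all $n$; hence $d(K^n(x),3^{\infty\times\infty})\geq 2^{-\|(a,b)\|_\infty}>0$ does not tend to $0$, and $x\notin B(3^{\infty\times\infty})$. Consequently $[w]_{(i,j)}\not\subseteq B(3^{\infty\times\infty})$, proving the proposition. The only mildly delicate point is the neighborhood count $|N_{(a,b)}\cap R|\leq 3$, which is purely combinatorial and hinges on $R$ being a rectangle and the Moore neighborhood being a $3\times 3$ square; beyond that, the monotonicity of the rule makes the induction essentially automatic.
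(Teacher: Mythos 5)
Your proof is correct and rests on exactly the same combinatorial observation as the paper's: a cell outside the rectangular support of the pattern has at most $3$ of its $3\times 3$ Moore neighborhood inside that support, so with a threshold of at least $4$ the pattern alone can never trigger a bit flip outside itself. In fact your write-up is more complete than the paper's, which asserts that the limit ``does not depend on the cylinder'' without explicitly constructing the all-zero extension of the pattern or carrying out the induction showing that the relevant bit of every outside cell stays at $0$ forever.
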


\begin{proof}
Let $[u]$ be a cylinder from $A^{%
%TCIMACRO{\U{2124} }%
%BeginExpansion
\mathbb{Z}
%EndExpansion
^{2}}$ and note the size of $u$ by $(i,j)=size(u).$ We can suppose that $%
i=j>1.$ Then the sum of the coefficients of $2^{0}$ and $2^{1}$ of the cells
around the pattern $u$ have at most $3$ mutual cells with $u$.\newline
Or, as $P_{z}\geq 4$ or $P_{e}\geq 4$ then $\underset{n\rightarrow +\infty }{%
\lim }K^{n}(x)$ for a point $x\in \lbrack u]$ do not depend on the cylinder $%
[u].$
\end{proof}

\section{Conclusion}

We studied the topological properties of the two dimensional cellular
automaton model of language shift in Algeria for different values of the
parameters $P_{z}$ and $P_{e}$ where we showed that the cellular automaton
do not have any strictly temporally periodic points for any values of $P_{z}$
and $P_{e}$ and in particular for $(P_{z},P_{e})=(9,9)$ all points are fixed
points. Also, we proved that the cellular automaton $K$ is almost
equicontinuous and not surjective.

We developed an encoding for the bassin of attraction of the fixed points of
the cellular automaton from which we deduced that if one of the parameters
is greater than four then the bassin of attraction of the fixed point $%
3^{\infty \times \infty }$ do not contain any cylinder and we specified the
bassin of attraction of the same fixed point for particular values of $P_{z}$
and $P_{e}.$

\bibliographystyle{umthesis}
\bibliography{articles,book,texjourn,texnique,tubguide,tugboat,type,typeset,umthsmpl,unicode,uwthesis,ws-book-sample,ws-pro-sample,xampl}

\end{document}